\tikzset{
    >=stealth,
    every picture/.style={thick},
    graphs/every graph/.style={empty nodes},
}
\tikzstyle{vertex}=[
\tikzstyle{printersafe}=[decoration={snake,amplitude=0pt}]
\newcommand{\Pic}{\operatorname{Pic}}
\newcommand{\rank}{\operatorname{rank}}
\newcommand{\Aut}{\operatorname{Aut}}
\newcommand{\Nef}{\operatorname{Nef}}
\newcommand{\MW}{\operatorname{MW}}
\newcommand{\Bl}{\operatorname{Bl}}
\renewcommand{\setminus}{\backslash}
  \newtheorem{theorem}{Theorem}[section]
  \newtheorem{lemma}[theorem]{Lemma}
  \newtheorem{proposition}[theorem]{Proposition}
  \newtheorem{corollary}[theorem]{Corollary}
  \newtheorem{definition}[theorem]{Definition}
  \newtheorem{construction}[theorem]{Construction}
\newtheorem{remark}[theorem]{Remark}
\theoremstyle{remark}
\numberwithin{equation}{section}
\title{Rational surfaces with a non-arithmetic automorphism group}
\author{Jennifer Li}
\address{
Department of Mathematics\\
Princeton University\\304 Washington Road\\Princeton\\NJ 08544\\USA}
\email{jenniferli@princeton.edu}
\author{Sebasti\'{a}n Torres}
\address{
Departamento de Matem\'atica\\
Universidad T\'ecnica Federico Santa Mar\'ia\\Avenida Espa\~na 1680\\Valpara\'iso\\Chile}
\email{sebastian.torresk@usm.cl}
\subjclass[2020]{14J50, 14J26, 14J27}
\begin{document}

\begin{abstract}
In \cite{T12}, Totaro gave examples of a K3 surface such that its automorphism group is not commensurable with an arithmetic group, answering a question of Mazur \cite[\S7]{M93}. We give examples of rational surfaces with the same property. Our examples $Y$ are Looijenga pairs, i.e., there is a connected singular nodal curve $D \subset Y$ such that $K_{Y} + D = 0$.
\end{abstract}

\maketitle

\section{Introduction}

A Looijenga pair is a pair $(Y, D)$ in which $Y$ is a smooth, complex projective surface and $D\in |-K_Y|$ is a (reduced) anticanonical divisor given by a connected singular nodal curve. Such boundary $D$ must be either a cycle of smooth rational curves or an irreducible rational curve with a single node. Throughout this article we will use some general facts about these surfaces, and more details can be found in \cites{GHK15,F15}. Such a surface $Y$ is necessarily rational (see e.g. \cite[\S1]{GHK15}). We fix an orientation of the cycle $D$ and write $D=D_1+\ldots+D_r$, where $D_i$ are the irreducible components of $D$, and where the order is compatible with the orientation. If $r=2$, then $D$ consists of a cycle of two smooth rational curves intersecting at two points. In the case $r>2$, for every $i\in\mathbb{Z}/r\mathbb{Z}$ the component $D_i$ intersects each of the $D_{i-1}$ and $D_{i+1}$ once, and no other component. We denote by $D_i^\circ$ the component $D_i$ minus the two intersection points with $D_{i\pm1}$.

We say that the pair $(Y,D)$ is negative (semi)definite if the intersection matrix of $D_1,\ldots,D_r$ is so. The sequence $(D_1^2,\ldots,D_r^2)$ is called the self-intersection sequence of $(Y,D)$. If $D_i^2=-2$ for every $i$, then $(Y,D)$ is negative semidefinite. Assuming that there are no $(-1)$-components in the boundary, $(Y,D)$ is negative definite if and only if $D_i^2\leq -2$ for every $i$ and $D_i^2\leq-3$ for at least one index $i$ \cite[\S1]{F15}. The group $\Pic(Y)$ is isomorphic to $H^2(Y,\mathbb{Z})$ via the first Chern class. We denote its rank by $\rho(Y)$, which equals the second Betti number of $Y$. That is, $\rho(Y)=b_2(Y)=10-D^2$, since $K_Y=-D$.

When $(Y,D)$ is negative definite, $D$ can be analytically contracted to a cusp singularity \cite{G62}, and in this way we obtain a normal complex analytic surface $\hat{Y}$ with trivial dualizing sheaf. Thus, $\hat{Y}$ can be regarded as a singular analog of a K3 surface.

\begin{definition}
An automorphism of a Looijenga pair $(Y,D)$ is an automorphism $\varphi:Y \xrightarrow{\sim} Y$ such that $\varphi(D_i)=D_i$ for every $i$ and $\varphi$ preserves the orientation if $r\leq 2$. We denote by $\Aut (Y,D)$ the group of automorphisms of the pair $(Y,D)$.
\end{definition}

\begin{definition}
    Two groups $G$, $H$ are said to be commensurable with each other if they have finite-index subgroups $G'\subset G$ and $H'\subset H$ such that $G'\cong H'$. 
    In this case, we write $G \doteq H$.
\end{definition}

\begin{definition}
An arithmetic group is a subgroup $G$ of the group of $\mathbb{Q}$-points of some $\mathbb{Q}$-algebraic group $H_\mathbb{Q}$ such that, for some integral structure on $H_\mathbb{Q}$, the intersection $G\cap H(\mathbb{Z})$ is of finite index in both $G$ and $H(\mathbb{Z})$.
\end{definition}

In \cite{T12}, Totaro showed that the automorphism group of a K3 surface need not be commensurable with an arithmetic group, thereby answering a question by Mazur \cite[\S 7]{M93}. In \cite{L18}, Lesieutre found a projective variety of dimension six with non-finitely generated (in particular, non-arithmetic) automorphism group. Inspired by Lesieutre's work, Dinh and Oguiso \cite{DO19} found projective varieties of each dimension $d\geq 2$ with non-finitely generated automorphism group. Remarkably, rational projective varieties with the same property have been found in every dimension $d\geq 3$ by Dinh, Oguiso and Yu \cite{DOY22}. In \cite{O20}, Oguiso finds a smooth projective surface in odd characteristic, birational to a K3 surface, with non-finitely generated automorphism group.
In the present article, we find a rational surface, namely a Looijenga pair, with discrete and finitely generated yet non-arithmetic automorphism group. As far as the authors know, this is the first rational example in dimension $2$ with this property.

\begin{theorem}
\label{thm:introduction}
There exists a (negative definite) Looijenga pair $(Y,D)$ such that the automorphism group $\Aut(Y,D)$ is not commensurable with an arithmetic group.   
\end{theorem}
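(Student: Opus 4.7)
\emph{Proof proposal.} My plan is to adapt Totaro's strategy from \cite{T12} for K3 surfaces to the setting of negative definite Looijenga pairs. For such a pair $(Y,D)$, the Hodge index theorem gives $\Pic(Y)$ signature $(1,\rho(Y)-1)$. In the negative definite case $D^2<0$, since $v=(1,\ldots,1)$ is in the domain of the (negative definite) intersection form of the components $D_i$, so the orthogonal complement $\Lambda:=[D]^\perp\subset\Pic(Y)$ is a hyperbolic lattice of signature $(1,\rho(Y)-2)$. Because every $\varphi\in\Aut(Y,D)$ fixes each class $[D_i]$ pointwise, there is a natural homomorphism
$$\rho\colon\Aut(Y,D)\longrightarrow O(\Lambda),$$
whose kernel is finite (automorphisms acting trivially on $\Pic(Y)$ fix the cuspidal contraction and act as a finite group). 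Since commensurability descends under such quotients, it suffices to produce $(Y,D)$ for which the image of $\rho$ is non-arithmetic up to commensurability.

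The structural step is to identify $\rho(\Aut(Y,D))$, up to finite index, with a hyperbolic reflection group $W\subset O(\Lambda)$ generated by reflections in an explicit root system of $(-2)$-classes. On the geometric side, internal $(-2)$-curves orthogonal to $D$—together with $(-2)$-classes produced by birational modifications of the pair, i.e.\ nodal reflections—yield honest involutive automorphisms of $(Y,D)$, realizing the corresponding reflections in $\Lambda$. The content of this step is both upper and lower: the upper bound (the image preserves effective classes, hence lies in an admissible subgroup of $O(\Lambda)$) is formal, while the lower bound (enough reflections lift to genuine automorphisms, not merely pseudo-automorphisms) should follow from the Torelli-type theory of anticanonical pairs developed in \cite{GHK15} and \cite{F15}, provided that $(Y,D)$ is chosen generically within its deformation class.

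It remains to arrange the self-intersection sequence $(D_1^2,\ldots,D_r^2)$ so that the lattice $\Lambda$ admits no arithmetic reflection group as a finite-index supergroup. Here I would exploit Vinberg's arithmeticity criterion in the same way Totaro does: pick a target hyperbolic reflection lattice known to fail Vinberg's criterion (for instance, one of Bugaenko's or Vinberg--Kaplinskaya's examples, or the lattices used in \cite{T12}), and realize $\Lambda$ as an isometric copy of that lattice. Friedman's existence results \cite{F15} should allow one to build a Looijenga pair with any admissible self-intersection sequence by blowing up $\pp^2$ along a nodal anticanonical cycle, thereby producing a geometric model with the prescribed $[D]^\perp$.

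The principal obstacle is the lower-bound half of Step~2: showing that the geometrically realized reflections are numerous enough to generate a finite-index subgroup of $W$. Unlike the K3 case, where the Global Torelli theorem directly converts lattice isometries into automorphisms, for Looijenga pairs one only has pseudo-automorphisms in general, and a careful deformation-theoretic genericity argument is needed to ensure that sufficiently many nodal reflections correspond to actual automorphisms of $(Y,D)$. Once this is established, the non-arithmeticity of $\Aut(Y,D)$ follows automatically from that of $W$, since commensurability is an equivalence relation.
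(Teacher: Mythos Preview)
Your proposal rests on a misconception about how reflections interact with automorphisms. Reflections $s_\alpha$ in classes of internal $(-2)$-curves do \emph{not} lift to automorphisms of $(Y,D)$: they send the nef cone to an adjacent Weyl chamber, whereas any genuine automorphism must preserve $\Nef(Y)$. Concretely, the Weyl group $W_Y$ acts simply transitively on the chamber decomposition of the relevant cone (see \cite[Theorem~3.2]{GHK15}), and the image of $\Aut(Y,D)$ in $O(\Lambda)$ stabilizes the distinguished chamber, so $W_Y$ and $\Aut(Y,D)$ intersect trivially. The same phenomenon already occurs for K3 surfaces, so this is not the mechanism Totaro uses either. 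Your ``principal obstacle'' is therefore not a technical difficulty to be overcome by genericity---it is a genuine obstruction, and in fact genericity removes internal $(-2)$-curves entirely. Consequently the plan of identifying $\Aut(Y,D)$ with a reflection group and then invoking Vinberg's criterion cannot work.

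The paper's route is quite different. One applies Totaro's sandwich criterion (Theorem~\ref{thm:Totaro-mainTheorem}) directly: for a carefully built negative definite pair $(\widetilde Y,\widetilde D)$ with $\Lambda(\widetilde Y,\widetilde D)$ of signature $(1,3)$, one exhibits a chain $\mathbb{Z}^2\subset S\subset O(M)$ in which both inclusions have infinite index. The copy of $\mathbb{Z}^2$ comes from the Mordell--Weil group of an elliptic fibration on a blow-down $Y$ (the translations are honest automorphisms, and a finite-index subgroup lifts through the blow-up); infinite index of $\mathbb{Z}^2$ in $S$ is witnessed by a second, distinct elliptic fibration whose Mordell--Weil group meets the first only finitely (Lemma~\ref{lem:TwoDifferentMW}); and infinite index of $S$ in $O(M)$ is witnessed precisely by the infinite Weyl group $W_{\widetilde Y}$, which is \emph{disjoint} from $S$ by the chamber argument above. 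The construction is arranged (via the period map, Theorem~\ref{thm:PeriodPointSurjective}, and Shioda--Tate) so that all the required ranks and torsion conditions hold. None of this involves realizing $\Aut$ as a reflection group or appealing to Vinberg's criterion.
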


We note that, when $(Y,D)$ is negative definite, $\Aut(Y,D)$ is a finite-index subgroup of $\Aut(Y)$. Indeed, in this case, every automorphism of $Y$ must permute the components $D_1,\ldots,D_r$, which gives a homomorphism $\rho:\Aut(Y)\to S_r$ (where $S_{r}$ denotes the symmetric group on $r$ elements), and $\Aut(Y,D)$ is precisely the subgroup of $\ker\rho$ consisting of automorphisms that preserve the orientation (if $r\leq 2$), which has finite index in $\Aut(Y)$. In particular, both groups are commensurable with each other, and Theorem \ref{thm:introduction} may be considered as a statement about $\Aut(Y)$ itself.

Since every automorphism of $Y$ induces an isometry of the Picard lattice, we have a group homomorphism $\Aut(Y)\to O(\Pic(Y))$, the kernel of which is finite (see e.g. \cite[Proposition 2.1]{DM19}). 
Using this, the proof of Theorem \ref{thm:introduction} reduces to studying groups that live inside $O(\Pic(Y))$.
In order to do this, we follow the techniques used in \cite[\S 7]{T12}. Most importantly, we make use of the following theorem.

\begin{theorem}
\label{thm:Totaro-mainTheorem} \cite[Theorem 7.1]{T12}
Let $M$ be a lattice of signature $(1, m)$ for $m \geq 3$. Let $S$ be an infinite-index subgroup in $O(M)$. Suppose that $\mathbb{Z}^{m-1}$ is an infinite-index subgroup of $S$. Then $S$ is not commensurable with an arithmetic group.
\end{theorem}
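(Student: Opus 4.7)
The approach is geometric: since $M$ has signature $(1,m)$, the group $O(M)$ is an arithmetic lattice in $\mathrm{Isom}(\mathbb{H}^m) \cong O(1,m)$, acting with finite covolume on hyperbolic $m$-space. The subgroup $S \subset O(M)$ is therefore discrete, and the hypothesis $[O(M):S] = \infty$ forces $\mathbb{H}^m/S$ to have infinite volume, so $S$ is not itself a lattice in $O(1,m)$. I would first show that the subgroup $A := \mathbb{Z}^{m-1} \subset S$ consists of parabolic isometries sharing a unique fixed point $\xi \in \partial \mathbb{H}^m$: by the trichotomy of isometries of $\mathbb{H}^m$ (elliptic, hyperbolic, parabolic) and the structure of their centralizers, a discrete free abelian subgroup of rank $m-1 \geq 2$ can only lie inside the unipotent radical $N \cong \mathbb{R}^{m-1}$ of a single boundary-point stabilizer.

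Suppose for contradiction that $S$ is commensurable with an arithmetic group $\Gamma$. Passing to common finite-index subgroups (which preserves the infinite-index hypotheses, since $A$ intersected with a finite-index subgroup of $S$ remains isomorphic to $\mathbb{Z}^{m-1}$), I may assume $S$ itself is arithmetic. The key step is to deduce that $S$ acts on $\mathbb{H}^m$ as a lattice. Since $S$ is arithmetic, it is a lattice in some real semisimple Lie group $H(\mathbb{R})$ coming from its $\mathbb{Q}$-structure; on the other hand, the embedding $S \hookrightarrow O(1,m)$ gives a second discrete realization. I expect to use Margulis' commensurator characterization of arithmeticity, combined with the presence of the full-rank cusp subgroup $A$ of infinite index (which prevents $S$ from being virtually abelian), to conclude that inside $O(1,m)$ the group $S$ is commensurable with a standard arithmetic lattice such as $O(M')$ for some sublattice $M' \subset M$.

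Once $S$ is shown to be commensurable inside $O(1,m)$ with an arithmetic lattice, it is itself a lattice in $O(1,m)$; but a lattice contained in the lattice $O(M)$ must have finite index, since the index equals the ratio of finite covolumes---contradicting $[O(M):S] = \infty$. The principal obstacle is the middle paragraph: reconciling the abstract arithmeticity of $S$ with its discrete embedding in $O(1,m)$ so as to produce geometric rigidity. The dimension hypothesis $m \geq 3$ should enter here via Mostow-type rigidity phenomena (which fail in dimension $2$), and both infinite-index hypotheses are essential: the infinite index of $A$ in $S$ rules out the trivial commensurability $S \doteq \mathbb{Z}^{m-1}$, while the infinite index of $S$ in $O(M)$ provides the final contradiction.
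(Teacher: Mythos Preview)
This theorem is not proved in the present paper; it is quoted from Totaro \cite[Theorem 7.1]{T12} and invoked as a black box in the proof of Theorem~\ref{thm:MainThm}. There is therefore no proof here against which to compare your proposal.

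That said, your proposal has a genuine gap, and it is precisely the one you flag as the ``principal obstacle.'' You want to pass from ``$S$ is abstractly commensurable with an arithmetic group'' to ``$S$ is commensurable, inside $O(1,m)$, with an arithmetic lattice of the form $O(M')$.'' The tools you name do not accomplish this. Mostow rigidity and Margulis' commensurator criterion both take as input a \emph{lattice} in a semisimple Lie group; but $S$ is, by hypothesis, not a lattice in $O(1,m)$---it has infinite covolume there. Abstract arithmeticity gives a lattice embedding of (a finite-index subgroup of) $S$ into some other $H(\mathbb{R})$, with no a priori relation to $O(1,m)$; rigidity cannot transport a lattice structure across an arbitrary infinite-covolume discrete embedding. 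As written, the middle paragraph is a hope rather than an argument, and the final contradiction (``a lattice contained in a lattice has finite index'') is never reached.

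For orientation, Totaro's own proof in \cite{T12} does not try to realize $S$ as a lattice in $O(1,m)$. It works instead with invariants intrinsic to $S$ as an abstract group: the subgroup $\mathbb{Z}^{m-1}$ is forced (as you correctly observe) to be unipotent, fixing a single point of $\partial\mathbb{H}^m$, and the tension between the virtual cohomological dimension of $S$, the rank of its unipotent abelian subgroups, and the structure theory of arithmetic groups then contradicts $[\,S:\mathbb{Z}^{m-1}\,]=\infty$. The hypothesis $m\ge 3$ ensures $m-1\ge 2$, so that $S$ contains $\mathbb{Z}^2$ and is not virtually free.
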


We will study $\Aut (Y,D)$ through its action on the perpendicular lattice to $D_1,\ldots,D_r$. In order to construct some infinite subgroups of $\Aut (Y,D)$, we will, as in \cite[\S 7]{T12}, construct some elliptic fibrations and compute their rank using the Shioda--Tate formula.

\subsection{Acknowledgments}
The authors are very grateful to Paul Hacking for bringing their attention to this problem. They also thank Paul Hacking, J\'{anos} Koll\'{a}r, Burt Totaro, and Chenyang Xu for very helpful discussions and comments, as well as the referee for useful feedback. The second author was partially supported by the Ministry of Education and Science of the Republic of Bulgaria through the Scientific Program ``Enhancing the Research Capacity in Mathematical Sciences (PIKOM)''.

\section{Background notions}

Throughout the article, $(Y,D)$ will denote a Looijenga pair with an anticanonical cycle of length $r>1$, together with an orientation of the cycle, as defined in the introduction. 
We need some definitions first.

\begin{definition}
An internal $(-2)$-curve is a smooth rational curve of self-intersection $-2$ disjoint from $D$. The subgroup $W_Y\subset O(\Pic (Y))$ generated by the reflections along internal $(-2)$-curves is called the Weyl group of $Y$. The pair $(Y,D)$ is said to be generic if it has no internal $(-2)$-curves.
\end{definition}

By \cite[Lemma 2.1]{GHK15}, there is a canonical identification $\Pic^0 (D)\cong\mathbb{G}_m$, where $\Pic^0(D)$ is the subgroup of $\Pic (D)$ consisting of line bundles $L$ on $D$ such that $\deg L|_{D_i}=0$ for each $i$. Under this identification, if $p,q\in D_i^\circ$ and we choose coordinates on $D_i^\circ\cong\mathbb{G}_m$ such that $p$ corresponds to $1$ and $q$ corresponds to $\lambda$, then $\mathcal{O}_D(q-p)\in\Pic^0(D)$ corresponds to $\lambda^{-1}\in\mathbb{G}_m$ \cite[Lemma 1.6]{F15}.

Recall that on $\Pic (Y)$ we have an intersection pairing. We use it to define the following sublattice of $\Pic (Y)$.

\begin{definition}
For a Looijenga pair $(Y,D)$, let $\Lambda(Y,D)=\langle D_1,\ldots,D_r\rangle^\perp$ be the sublattice of classes $\alpha\in\Pic (Y)$ such that $\alpha\cdot D_i=0$ for every $i$.
\end{definition}

We have the following Lemma.

\begin{lemma}\cite[Lemma 1.5]{F15}
\label{lem:RankOfM}
Let $s$ be the rank of the kernel of the homomorphism $\bigoplus_{i=1}^r\mathbb{Z}[D_i]\to \Pic (Y)$. Then $\Lambda(Y,D)$ is a lattice of rank $10-D^2-r+s$. In particular, if the classes of $D_1,\ldots,D_r$ are linearly independent in $\Pic (Y)$, then the rank of $\Lambda(Y,D)$ equals $10-D^2-r$.
\end{lemma}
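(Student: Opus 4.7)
The plan is a straightforward dimension count based on the non-degeneracy of the intersection pairing on $\Pic(Y)\otimes\mathbb{Q}$. Let $\phi\colon\bigoplus_{i=1}^r\mathbb{Z}[D_i]\to\Pic(Y)$ be the natural map, and set $N=\operatorname{im}\phi$. Since the domain of $\phi$ is a free abelian group of rank $r$, the rank-nullity theorem applied to $\phi$ gives $\rank N=r-s$. By the very definition of $\Lambda(Y,D)$, a class $\alpha\in\Pic(Y)$ satisfies $\alpha\cdot D_i=0$ for every $i$ if and only if it pairs trivially with every element of $N$; hence $\Lambda(Y,D)$ is exactly the orthogonal complement $N^{\perp}$ inside $\Pic(Y)$.

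The key input I need is that the intersection pairing is non-degenerate on $\Pic(Y)\otimes\mathbb{Q}$. This is standard: as already recorded in the paper, the first Chern class identifies $\Pic(Y)$ with $H^2(Y,\mathbb{Z})$ (using that $Y$ is a smooth rational surface, so $H^1(Y,\mathcal{O}_Y)=0$), and Poincar\'e duality makes the cup product pairing on $H^2(Y,\mathbb{Z})$ unimodular modulo torsion, and in particular non-degenerate after tensoring with $\mathbb{Q}$.

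Granting this, for any subgroup $L\subset\Pic(Y)$ we have $\rank L+\rank L^{\perp}=\rank\Pic(Y)$. Applying this with $L=N$ and using $\rank\Pic(Y)=\rho(Y)=10-D^2$ from the introduction yields
\[
\rank\Lambda(Y,D)=\rank N^{\perp}=(10-D^2)-(r-s)=10-D^2-r+s,
\]
as desired. The final assertion of the lemma is then the special case $s=0$, which is precisely the condition that $[D_1],\ldots,[D_r]$ be $\mathbb{Z}$-linearly independent in $\Pic(Y)$. I do not anticipate any real obstacle: the only point requiring a sentence of justification is the non-degeneracy of the intersection pairing, which is classical for smooth projective surfaces.
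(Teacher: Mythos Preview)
Your argument is correct: once one knows that the intersection pairing on $\Pic(Y)\otimes\mathbb{Q}$ is non-degenerate, the computation is exactly the rank-nullity count you gave, and your justification of non-degeneracy via $\Pic(Y)\cong H^2(Y,\mathbb{Z})$ and Poincar\'e duality is the right one. Note, however, that the paper does not actually supply its own proof of this lemma; it simply quotes it from \cite[Lemma 1.5]{F15}, so there is no in-paper argument to compare yours against. Your proof is the standard one and is what one would expect to find in the cited reference.
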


Using the identification $\Pic^0 (D)\cong\mathbb{G}_m$ above, we can define the following.

\begin{definition}
\label{def:periodPoint}
The period point of $(Y,D)$ is the morphism
\begin{center}
$\phi_Y:\Lambda(Y,D)\to\Pic^0(D)\cong\mathbb{G}_m$, given by $L\mapsto L|_D$
\end{center}
\end{definition}

By abuse of notation, we may denote $\phi_Y(\mathcal{O}_Y(E))$ simply by $\phi_Y(E)$, where $E$ is a divisor.
By \cite[\S 2]{GHK15}, any given morphism $\phi:\Lambda(Y,D)\to\Pic^0(D)$ can be realized as the period point of some deformation equivalent pair $(Y',D')$. More precisely, we have the following result:

\begin{theorem} \cite[Theorem 3.17]{F15}
\label{thm:PeriodPointSurjective}
Let $\phi:\Lambda(Y,D)\to\mathbb{G}_m$ be any morphism. Then there is a deformation equivalent pair $(Y',D')$ and an identification $\Lambda(Y,D)\cong \Lambda(Y',D')$ induced by parallel transport, such that the period point $\phi_{Y'}$ of $(Y',D')$ corresponds to $\phi$.
\end{theorem}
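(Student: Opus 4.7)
The plan is to construct $(Y',D')$ by realizing $\phi$ as the period of a deformation of $(Y,D)$ obtained from varying certain blowup parameters. The argument proceeds in three steps, followed by identification of the main obstacle.

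First, I would pass to a \emph{toric model}: a birational morphism $\pi\colon(Y,D)\to(\overline{Y},\overline{D})$, where $(\overline{Y},\overline{D})$ is a smooth projective toric surface with its toric boundary cycle, and $\pi$ is a composition of blowups $Y\to Y_{N-1}\to\cdots\to Y_0=\overline{Y}$ at $N$ smooth boundary points $p_j\in\overline{D}_{i(j)}^\circ$. Such a model exists after a finite sequence of internal corner blowdowns, by the discussion in \cite[\S 2]{GHK15}. Varying the positions $\lambda_j\in\overline{D}_{i(j)}^\circ\cong\mathbb{G}_m$ produces a connected family $\mathcal{Y}\to T=\mathbb{G}_m^N$ of Looijenga pairs, all deformation equivalent to $(Y,D)$; parallel transport canonically identifies $\Lambda(\mathcal{Y}_t,D_t)$ with $\Lambda(Y,D)$ for each $t\in T$.

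Second, I would compute the period point of $(\mathcal{Y}_t,D_t)$ as an explicit monomial in $\lambda=(\lambda_1,\ldots,\lambda_N)$. The key fact is that the period point of a toric pair is trivial, because every line bundle on a smooth toric surface is linearly equivalent to a torus-invariant one, whose restriction to the toric boundary has degree $0$ on every component iff it is trivial. In particular, for each exceptional divisor $E_j$ one can choose a torus-invariant $F_j$ on $\overline{Y}$ such that $E_j-\pi^*F_j\in\Lambda(Y,D)$, and using the coordinate formula from \cite[Lemma 1.6]{F15} one shows $\phi_{\mathcal{Y}_t}(E_j-\pi^*F_j)=\lambda_j$ (relative to a fixed reference point on each $\overline{D}_i^\circ$). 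For a general $L\in\Lambda(Y,D)$ written as $\sum_j a_j(E_j-\pi^*F_j)$, this yields $\phi_{\mathcal{Y}_t}(L)=\prod_j\lambda_j^{a_j}$.

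Third, since $\Pic(Y)$ is spanned by $\pi^*\Pic(\overline{Y})$ together with the $E_j$, and $\pi^*\Pic(\overline{Y})$ is contained in the $\mathbb{Z}$-span of $D_1,\ldots,D_r$ modulo $\Lambda(Y,D)$, the classes $[E_j-\pi^*F_j]$ generate $\Lambda(Y,D)$ as a $\mathbb{Z}$-module. Hence the period map $T\to\Hom(\Lambda(Y,D),\mathbb{G}_m)$ is obtained by applying $\Hom(-,\mathbb{G}_m)$ to a surjection $\mathbb{Z}^N\twoheadrightarrow\Lambda(Y,D)$, and is therefore surjective onto $\Hom(\Lambda(Y,D),\mathbb{G}_m)$. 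Thus any prescribed $\phi$ is realized by some fiber $(\mathcal{Y}_\lambda,D_\lambda)=:(Y',D')$.

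The main obstacle is the explicit verification in the second step: one must identify a suitable $F_j$ for each exceptional $E_j$ and prove that $\phi(E_j-\pi^*F_j)$ matches the position $\lambda_j$ under $\Pic^0(D)\cong\mathbb{G}_m$. This boils down to tracking how $\mathcal{O}_Y(E_j)|_D$ and $\pi^*\mathcal{O}_{\overline{Y}}(F_j)|_D$ trivialize in terms of the affine coordinates on $\overline{D}_{i(j)}^\circ$, and invoking \cite[Lemma 1.6]{F15} to identify the discrepancy with the class of $\mathcal{O}_D(p_j-p_j^{(0)})$ for a reference point $p_j^{(0)}$. Once this formula is established, the surjectivity argument becomes a formal consequence of duality for free abelian groups.
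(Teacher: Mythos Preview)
The paper does not prove this theorem; it is simply quoted from \cite[Theorem 3.17]{F15}. So there is no ``paper's own proof'' to compare against beyond the reference.

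Your sketch is essentially the argument given in \cite{F15} and \cite[\S2]{GHK15}: reduce to a toric model, use that the period of a toric pair is trivial, and vary the interior blowup points to hit every element of $\Hom(\Lambda,\mathbb{G}_m)$. Two small points deserve care. First, the phrase ``internal corner blowdowns'' is garbled: what is actually needed is that after finitely many \emph{corner (toric) blowups} of $(Y,D)$, the resulting pair admits a toric model, i.e.\ a contraction to a toric pair via interior blowdowns; one then checks that corner blowups do not affect the period map in an essential way, so the surjectivity statement descends back to the original $(Y,D)$. Second, the blowups producing $Y$ from $\overline{Y}$ may be iterated (infinitely near points rather than $N$ distinct points on $\overline{D}$), so the parameter space is not literally $\mathbb{G}_m^N$ but a tower of $\mathbb{G}_m$-bundles; this does not affect the surjectivity conclusion but should be acknowledged. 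With these adjustments your outline matches the cited proof.
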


We will use this to construct a Looijenga pair whose period point satisfies certain conditions. We may denote the pair $(Y',D')$ from Theorem \ref{thm:PeriodPointSurjective} by $(Y_\phi,D_\phi)$.

\begin{definition}
\label{def:roots}
    We denote by $\Phi$ the set of classes in $\Lambda(Y,D)$ with square $-2$ which are realized by an internal $(-2)$-curve on some deformation equivalent Looijenga pair $(Y',D')$. An element $\alpha\in\Phi$ is called a root.
\end{definition}

\begin{remark}
\label{rem:PeriodPointAndMinus2Curves}
    By \cite[Corollary 3.5]{GHK15}, the condition that $(Y,D)$ is generic is equivalent to having $\phi_Y(\alpha)\neq 1$ for every $\alpha\in\Phi$. In this case, there are no $(-2)$-curves in $Y \setminus D$.
\end{remark}

\section{Elliptic fibrations}

Our goal is to use Theorem \ref{thm:Totaro-mainTheorem}, for which we will need to produce some infinite-order subgroups of the automorphism group of a Looijenga pair. This is achieved by constructing elliptic fibrations with an infinite Mordell--Weil group. In the present section we collect some well-known facts about elliptic fibrations that we will need later.
By an elliptic fibration we will mean a proper morphism $Y\to B$, where $B$ is a smooth curve, such that the general fiber is a smooth curve of genus one.
For our purposes, all elliptic fibrations will be over $B=\mathbb{P}^1$, since under our assumptions $Y$ is a 
smooth projective 
rational surface.

\begin{definition}\label{def:MW}
Let $\pi:Y\to \mathbb{P}^1$ be an elliptic fibration.
We define the Mordell--Weil group of $\pi$ to be
\begin{equation*}
\MW(\pi) = \Pic^{0}(Y_{\eta}),
\end{equation*}
where $Y_{\eta}$ is the generic fiber. 
\end{definition}

The group $\MW(\pi)$ is abelian and finitely generated, and it acts by translation on each smooth fiber of $\pi$. If $\pi: Y \rightarrow \mathbb{P}^1$ is minimal, that is, if there are no $(-1)$-curves in the fibers of $\pi$, then $\MW(\pi)$ acts (faithfully) on $Y$ by automorphisms. If, in addition, $\pi$ has a section, then $\MW(\pi)$ acts simply transitively on the sections of $\pi$ by translation.

\begin{remark}
In Definition \ref{def:MW}, we do not require $\pi$ to have a section. If it does, then $\MW(\pi)$ can be identified with the set of sections.
The latter is sometimes given as a definition in the literature.
\end{remark}

\begin{lemma}
\label{lem:TwoDifferentMW}
    Let $\pi_i:Y\to \mathbb{P}^1$, $i=1,2$, be two different elliptic fibrations, and suppose there are subgroups $G_i\subset\MW(\pi_i)$ such that $G_i$ acts by automorphisms on $Y$. Then the intersection $G_1\cap G_2\subset \Aut(Y)$ is finite.
\end{lemma}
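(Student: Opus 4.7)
The plan is to show that any $g \in G_1\cap G_2$ is determined, up to finitely many choices, by its action on one general smooth fiber of $\pi_2$, and that triviality on such a fiber already forces $g=\id$.

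First, I would choose a general smooth fiber $F_2=\pi_2^{-1}(t)$ of $\pi_2$. Because $\pi_1\neq\pi_2$, the general fiber of $\pi_2$ is not contained in (and is not) a fiber of $\pi_1$, so $\pi_1|_{F_2}\colon F_2\to\mathbb{P}^1$ is a non-constant morphism of degree $d:=F_1\cdot F_2>0$, where $F_1$ is a general $\pi_1$-fiber. Fix an origin $O\in F_2$. Since $g\in G_2\subset\MW(\pi_2)$ acts on $Y$ preserving every $\pi_2$-fiber, its restriction to $F_2$ is a translation $t_{a_g}$ for some $a_g\in F_2$. Since $g\in G_1\subset\MW(\pi_1)$ also preserves every $\pi_1$-fiber, we have $\pi_1\circ g=\pi_1$; restricting to $F_2$ and evaluating at $O$ gives $\pi_1(a_g)=\pi_1(O)$, so $a_g$ lies in the fiber $(\pi_1|_{F_2})^{-1}(\pi_1(O))$, a set of cardinality at most $d$. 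Thus the map $G_1\cap G_2\to F_2$, $g\mapsto a_g$, has image of size $\leq d$.

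Next I would verify this map is injective. Suppose $a_g=O$, i.e.\ $g|_{F_2}=\id$. Since $g\in G_1$ acts as a translation on every smooth fiber $F_1$ of $\pi_1$, and the intersection $F_1\cap F_2$ is a nonempty finite set (of $d\geq 1$ points) all of which are fixed by $g$, the translation $g|_{F_1}$ has a fixed point and is therefore the identity. Hence $g$ restricts to the identity on the Zariski-dense union of general $\pi_1$-fibers, forcing $g=\id$ in $\Aut(Y)$. Combining the two steps yields $|G_1\cap G_2|\leq d<\infty$.

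The argument is essentially a specialization argument, and the only delicate point — really the crux — is ensuring that a general smooth fiber $F_2$ of $\pi_2$ gives a non-constant $\pi_1|_{F_2}$; this is precisely where the hypothesis that the two fibrations are distinct enters, and without it the whole scheme collapses since $G_1\cap G_2$ could contain all of $\MW(\pi_1)=\MW(\pi_2)$.
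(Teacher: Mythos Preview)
Your proof is correct and takes a genuinely different route from the paper's. The paper argues via hyperbolic geometry: by the Hodge Index Theorem the intersection form on $\Pic(Y)$ has signature $(1,\rho(Y)-1)$, so the positive cone modulo scaling is a model of hyperbolic space on which $O(\Pic(Y))$ acts by isometries. A nontrivial $g\in G_i$ is then an infinite-order isometry fixing the boundary point $[F_i]$, forcing $g$ to be parabolic; since a parabolic isometry has a \emph{unique} fixed point on the sphere at infinity and $[F_1]\neq[F_2]$, the free parts of $G_1$ and $G_2$ meet trivially. Your argument, by contrast, stays entirely inside the concrete geometry of the two fibrations: restriction to a general smooth $\pi_2$-fiber $F_2$ gives an injective homomorphism from $G_1\cap G_2$ into the translation group of $F_2$, and compatibility with $\pi_1$ traps the image in the finite fiber $(\pi_1|_{F_2})^{-1}(\pi_1(O))$, yielding the explicit bound $|G_1\cap G_2|\le F_1\cdot F_2$. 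The one step you leave implicit---that $\pi_1\neq\pi_2$ forces $F_1\cdot F_2>0$---is itself a small Hodge-index computation (two independent isotropic classes cannot be mutually orthogonal in signature $(1,n)$), so the paper's lattice viewpoint is not entirely absent; but your approach is more elementary, gives a quantitative bound, and does not require the classification of hyperbolic isometries. The paper's approach has the advantage of fitting naturally into the hyperbolic framework already used in the proof of Theorem~\ref{thm:MainThm}.
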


\begin{proof}
Equivalently, we may assume that each $G_i$ is free and show that $G_1\cap G_2$ is trivial.
We follow the proof of \cite[Corollary 7.2]{T12}. By the Hodge Index Theorem, the intersection form has signature $(1,\rho(Y)-1)$ on the Picard group of $Y$. Fix an ample divisor $H$. Then the quotient of the positive cone $C=\{\alpha\in\Pic(Y)\mid \alpha^2>0,\: \alpha\cdot H>0\}$ by $\mathbb{R}_{>0}$ can be identified with the $(\rho(Y)-1)$-dimensional hyperbolic space, and its group of isometries contains $O(\Pic(Y))$ as a discrete subgroup
(cf. \cite[\S5]{T12}).
Let $g\in G_i$ be non-trivial. Using the homomorphism $\Aut(Y)\to O(\Pic(Y))\subset O(C/\mathbb{R}_{>0})$, we identify $g$ with an infinite-order isometry of the hyperbolic space. Then the subgroup generated by $g$ is an infinite discrete subgroup of isometries where each element fixes a point in the boundary of the cone $C$, namely the class of a fiber $F_i$, which has $F_i^2=0$.
Following \cite[Theorem 2.6 of Chapter 5]{AVS93} and its proof, this implies that $g$ must be of parabolic type. Since parabolic motions have only one fixed point in the sphere at infinity, the fact that $F_1\neq F_2$ implies that $G_1\cap G_2$ must be trivial, as desired.
\end{proof}

\begin{lemma}
\label{lem:Period point-elliptic}
Let $D$ be a length $r$ cycle of $(-2)$-curves. Identify $\Pic^0(D)\cong \mathbb{G}_m$ as above and suppose that $\mathcal{O}_Y(D)|_D$ is torsion of order $m$. Then there is a minimal elliptic fibration $\pi:Y\to\mathbb{P}^1$ with $\pi^{*}(\infty)=mD$. In addition, if $m=1$ then $\pi$ has a section.
\end{lemma}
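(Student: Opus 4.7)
My plan is to realize $\pi$ as the morphism defined by the linear system $|mD|$. I begin with the preliminaries: $D^2 = 0$ (from the intersection matrix of a cycle of $(-2)$-curves), hence $K_Y^2 = 0$ and $\rho(Y) = 10 - K_Y^2 = 10$; moreover, adjunction combined with $K_Y = -D$ yields $2 p_a(D) - 2 = 0$, so $D$ has arithmetic genus one and dualizing sheaf $\omega_D \cong (K_Y + D)|_D \cong \mathcal{O}_D$.

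The heart of the argument is to show $h^0(Y, mD) = 2$, which I will do by induction on $k$ using the standard short exact sequence
\[
0 \to \mathcal{O}_Y((k-1)D) \to \mathcal{O}_Y(kD) \to \mathcal{O}_D(kD) \to 0.
\]
The key cohomological input lives on $D$: since $\omega_D \cong \mathcal{O}_D$, Riemann--Roch gives $\chi(\mathcal{O}_D(kD)) = 0$; and since $\mathcal{O}_D(D)$ has order exactly $m$, the bundle $\mathcal{O}_D(kD)$ is nontrivial with $H^0 = H^1 = 0$ for $1 \le k \le m-1$, and trivial with $H^0 = H^1 = \mathbb{C}$ for $k = m$. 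Starting from $h^0(\mathcal{O}_Y) = 1$ and $h^1(\mathcal{O}_Y) = 0$ and iterating the long exact sequences, I obtain $h^0(kD) = 1$ and $h^1(kD) = 0$ for $0 \le k \le m-1$; the $k = m$ step then gives $h^0(mD) = 2$ together with a section $s \in H^0(\mathcal{O}_Y(mD))$ whose restriction to $D$ is a nonzero constant.

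Let $D_\infty$ denote the zero divisor of $s$; it is a member of $|mD|$ containing no component of $D$, and $(mD) \cdot D_\infty = m^2 D^2 = 0$ then forces $mD$ and $D_\infty$ to be disjoint. Hence $|mD|$ is base-point free, and the induced $\pi : Y \to \mathbb{P}^1$ satisfies $\pi^{-1}(\infty) = mD$. A general fiber $F \equiv mD$ has $F^2 = 0 = F \cdot K_Y$, so it is a smooth curve of genus one and $\pi$ is an elliptic fibration. For minimality: any $(-1)$-curve $C$ contained in a fiber would satisfy $C \cdot (mD) = 0$ (since a fiber is numerically trivial against its own components), forcing $C \cdot D = 0$; but adjunction gives $C \cdot K_Y = -1$, i.e., $C \cdot D = 1$, a contradiction.

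When $m = 1$, the Picard rank $\rho(Y) = 10$ exceeds that of any minimal rational surface ($\mathbb{P}^2$ and the Hirzebruch surfaces all have Picard rank $\le 2$), so $Y$ must contain a $(-1)$-curve $E$. Adjunction gives $E \cdot D = 1$, so $\pi|_E : E \to \mathbb{P}^1$ has degree one and, since $E \cong \mathbb{P}^1$, it is an isomorphism, realizing $E$ as a section of $\pi$. I expect the cohomological induction to be the only step requiring real care; once $h^0(mD) = 2$ is in hand, the remaining steps (base-point freeness, the genus computation, minimality, and the existence of a section when $m = 1$) are routine intersection-theoretic consequences.
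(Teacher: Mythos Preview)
Your proof is correct and follows essentially the same approach as the paper: both use the short exact sequences $0\to\mathcal{O}_Y((k-1)D)\to\mathcal{O}_Y(kD)\to\mathcal{O}_D(kD)\to 0$ together with the vanishing $h^0(\mathcal{O}_D(kD))=h^1(\mathcal{O}_D(kD))=0$ for $1\le k\le m-1$ to lift the constant section of $\mathcal{O}_D$ to $H^0(\mathcal{O}_Y(mD))$, and then verify minimality and the existence of a section for $m=1$ by the same adjunction and Picard-rank arguments. The only cosmetic difference is that the paper writes the morphism explicitly as $(s^m:t)$, whereas you phrase it as the map given by the pencil $|mD|$ after computing $h^0(mD)=2$; these are the same construction.
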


\begin{proof}
    For each $1\leq k\leq m-1$, we twist the tautological short exact sequence by $\mathcal{O}_Y(kD)$ to obtain 
    \begin{equation}\label{eq:ShortSeq}
    0\to \mathcal{O}_Y((k-1)D)\to \mathcal{O}_Y(kD)\to\mathcal{O}_Y(kD)|_D\to 0.
    \end{equation}
First, we see that $H^0(D,\mathcal{O}_D(kD))=H^1(D,\mathcal{O}_D(kD))=0$. Indeed, call $L=\mathcal{O}_Y(kD)|_D$. Then $L$ is torsion in $\Pic^0(D)$ and nontrivial. Using Riemann--Roch,
\begin{equation*}
    \chi(L)=1-p_a(D)+\deg L=\deg L,
\end{equation*}
since $p_a(D)=1$.
But $L$ torsion implies $\deg L=0=h^0(L)-h^1(L)$. Since $L\in \Pic^0(D)$ but $L$ is nontrivial, $h^0(L)=0$ (cf. \cite[Lemma 1.7]{F15}), so we obtain $h^0(L)=h^1(L)=0$, as desired.

As a consequence, from \eqref{eq:ShortSeq} we get an isomorphism $H^{1}(Y,\mathcal{O}_{Y}(k-1)D) \xrightarrow{\sim} H^{1}(Y,\mathcal{O}_{Y}(kD))$ for each $1\leq k\leq m-1$. Since $Y$ is rational, $H^1(Y,\mathcal{O}_Y)=0$, and then by induction we get $H^{1}(Y,\mathcal{O}_{Y}(kD))=0$ for every $1\leq k\leq m-1$. In particular, $H^{1}(Y,\mathcal{O}_{Y}((m-1)D))=0$.

Now consider \eqref{eq:ShortSeq} with $k=m$, that is,
\begin{equation*}
0\to \mathcal{O}_Y((m-1)D)\to \mathcal{O}_Y(mD)\to\mathcal{O}_D\to 0.
\end{equation*}
Since $H^1(Y,\mathcal{O}_Y((m-1)D))=0$, we get a short exact sequence of global sections. Therefore, the constant section $1\in H^0(D,\mathcal{O}_D)$ has a lift $t$ to $H^0(Y,\mathcal{O}_Y(mD))$ that does not vanish anywhere in $D$. On the other hand, the tautological section $s\in H^0(Y,\mathcal{O}_Y(D))$ vanishes precisely along $D$ with multiplicity one. Then $(s^m:t)$ defines a morphism $\pi$ to $\mathbb{P}^1$ with fiber $mD$ over $\infty$.

We see that $\pi$ is an elliptic fibration. Let $F$ be a fiber. Then $\deg K_F= (K_Y+F)\cdot F=-D\cdot (mD)=0$ since $mD$ itself is a fiber. Thus the general fiber has genus one. Furthermore, $\pi$ is minimal. Indeed, any $(-1)$-curve in $Y$ not contained in $D$ must have intersection $1$ with $D=-K_Y$, but a fiber of $\pi$ other than $mD$ does not intersect $D$. 

Finally, when $m=1$, the $(-1)$-curves of $Y$ are precisely the sections. This follows from the fact that a $(-1)$-curve intersects the fiber with $F\cdot C = D\cdot C =1$. Since $Y$ is a rational surface of Picard rank $10-D^{2}=10$, it has a $(-1)$-curve by the classification of surfaces.
\end{proof}

\begin{remark}\label{rem:FiberIs-2}
If $\pi:Y\to \mathbb{P}^1$ is a minimal elliptic fibration, then one of the fibers must be $D$ with some multiplicity. This follows from the Kodaira canonical bundle formula \cite{K64}. 
Moreover, if $F$ is some other reducible fiber, $F$ must be a union of internal $(-2)$-curves with some multiplicities (cf. Kodaira's classification of singular fibers of minimal elliptic fibrations \cite{K63}).
\end{remark}

In order to compute the rank of the Mordell--Weil group of an elliptic fibration, we will use the Shioda--Tate formula \cite[Corollary 1.5]{S72}
(cf. also \cite[Lemma 5.1]{DM22}).

\begin{theorem}[Shioda--Tate formula]
\label{thm:Shioda-Tate}
Let $\pi:Y\to \mathbb{P}^1$ be an elliptic fibration.
Let $\Sigma \subset \mathbb{P}^1$ be the locus where the fibers $\pi^{-1}(p)$ are singular. For each $p\in\Sigma$, let $m_p$ be the number of irreducible components of $\pi^{-1}(p)$. Then $\rho(Y)=\rank\MW(\pi)+2+\sum_{p\in\Sigma}(m_p-1)$.
\end{theorem}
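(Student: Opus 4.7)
The plan is to analyze the restriction map $r \colon \Pic(Y) \to \Pic(Y_\eta)$, where $Y_\eta$ is the generic fiber of $\pi$, a smooth elliptic curve over the function field $\mathbb{C}(\mathbb{P}^1)$. First I would identify $\ker(r)$ as the \emph{vertical subgroup} $V \subset \Pic(Y)$ generated by classes of irreducible components of fibers of $\pi$: a divisor restricts trivially to $Y_\eta$ if and only if it is supported on finitely many closed fibers. Next I would observe that $r$ is surjective by extending any divisor on $Y_\eta$ to its Zariski closure in $Y$. This produces the short exact sequence
\begin{equation*}
0 \to V \to \Pic(Y) \to \Pic(Y_\eta) \to 0.
\end{equation*}

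The second step is to compute $\rank V$. Every fiber of $\pi$ is algebraically equivalent to a fixed class $F$, and for each $p \in \mathbb{P}^1$ the weighted sum of the irreducible components of $\pi^{-1}(p)$ represents $F$. By Zariski's lemma, the intersection form restricted to the components of a connected fiber is negative semidefinite with kernel spanned by the fiber class itself, so this is the only linear relation. Hence the components of a reducible fiber over $p \in \Sigma$ span a rank-$m_p$ sublattice of $V$ containing $F$, contributing $m_p - 1$ classes beyond $F$. Summing, $\rank V = 1 + \sum_{p \in \Sigma}(m_p - 1)$.

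The third step is to relate $\Pic(Y_\eta)$ to $\MW(\pi) = \Pic^0(Y_\eta)$ via the degree exact sequence
\begin{equation*}
0 \to \MW(\pi) \to \Pic(Y_\eta) \xrightarrow{\deg} \mathbb{Z}.
\end{equation*}
The image of $\deg$ is nontrivial because $Y$ carries horizontal divisors (e.g., an ample divisor is not supported on fibers), so this image has rank $1$ and $\rank \Pic(Y_\eta) = \rank \MW(\pi) + 1$. Combining with the previous sequence,
\begin{equation*}
\rho(Y) = \rank V + \rank \Pic(Y_\eta) = \rank \MW(\pi) + 2 + \sum_{p \in \Sigma}(m_p - 1).
\end{equation*}

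The main technical point I expect to require care is Zariski's lemma, which underlies both the rank computation for $V$ and the fact that horizontal classes always exist. Once those inputs are in place, the formula follows by the additive comparison of ranks across the two short exact sequences; no subtlety about torsion arises since the statement only concerns ranks.
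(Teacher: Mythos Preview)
Your argument is the standard proof of the Shioda--Tate formula and is essentially correct. The paper, however, does not prove this theorem at all: it is stated as a known result with a citation to Shioda \cite[Corollary 1.5]{S72} (cf.\ also \cite[Lemma 5.1]{DM22}), and is used as a black box in the subsequent arguments. So there is no paper proof to compare against; you have supplied one where the authors simply invoke the literature.

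One minor point worth tightening: in your identification of $\ker(r)$ with $V$, note that a divisor restricting trivially to $Y_\eta$ is, a priori, only \emph{linearly equivalent} to a vertical divisor, not equal to one. This is harmless for the rank computation but worth phrasing carefully. Also, your computation of $\rank V$ implicitly uses that the components of distinct fibers are linearly independent modulo the single fiber class $F$; this follows from Zariski's lemma applied fiber-by-fiber together with the orthogonality of components in different fibers, which you have in mind but could state explicitly.
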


\subsection{The case r=7.}
When $D$ is a cycle of seven $(-2)$-curves, the root system $\Phi$ can be described as follows. Let $(Y_e,D_e)$ be the deformation equivalent pair with a split mixed Hodge structure on $H^{2}(Y_e \setminus D_e)$, or equivalently, such that $\phi_{Y_e}=e$ is the identity (cf. \cite[Proposition 3.12]{F15}). Then the elliptic fibration $Y_e\to\mathbb{P}^1$ given by Lemma \ref{lem:Period point-elliptic} has two reducible fibers, which consist of $D$ and another $I_2$ fiber (a cycle of two $(-2)$-curves). Let $\beta$ and $F-\beta$ denote the two $(-2)$-curves of $I_{2}$, where $F$ is the class of a fiber. Then, by 
\cite[Proposition 9.16 and Corollary 9.20]{F15} 
$\Phi$ is the root system of type $\tilde{A}_{1}$ (that is, affine ${A}_{1}$) associated to the $I_{2}$ fiber, which is given by
\begin{equation}\label{eq:AffineA1}
\Phi=\{\pm\beta+kF \mid k\in\mathbb{Z} \}.
\end{equation}

\begin{lemma}
\label{lem:No-2Curves}
Let $(Y, D)$ be a Looijenga pair where $D$ is a cycle of $(-2)$-curves as above and $\phi_{Y}(D) = 1$. If $\phi_Y(\beta)\neq 1$, then $(Y,D)$ does not contain any internal $(-2)$-curves.
\end{lemma}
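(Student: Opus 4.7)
The plan is to invoke Remark \ref{rem:PeriodPointAndMinus2Curves}: since the pair $(Y,D)$ has no internal $(-2)$-curves precisely when $\phi_Y(\alpha)\neq 1$ for every root $\alpha\in\Phi$, it suffices to verify this condition on the full root system $\Phi$ described in \eqref{eq:AffineA1}. In other words, I would reduce the geometric statement to a computation on the abelian group homomorphism $\phi_Y$.

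First, I would identify the fiber class $F$ in \eqref{eq:AffineA1} with $D$ itself. Indeed, the hypothesis $\phi_Y(D)=1$ means that $\mathcal{O}_Y(D)|_D$ has order $m=1$ in $\Pic^0(D)\cong\mathbb{G}_m$, so Lemma \ref{lem:Period point-elliptic} produces a minimal elliptic fibration $\pi:Y\to\mathbb{P}^1$ whose fiber is $F=D$. Under parallel transport, this matches the fiber class used to describe $\Phi$ for $(Y_e,D_e)$, since the same lemma applied there also gives $F=D_e$.

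Next, using that $\phi_Y:\Lambda(Y,D)\to\mathbb{G}_m$ is a group homomorphism, for any root $\pm\beta+kF\in\Phi$ I compute
\begin{equation*}
\phi_Y(\pm\beta+kF)=\phi_Y(\beta)^{\pm 1}\cdot\phi_Y(F)^{k}=\phi_Y(\beta)^{\pm 1}\cdot\phi_Y(D)^{k}=\phi_Y(\beta)^{\pm 1}.
\end{equation*}
By hypothesis $\phi_Y(\beta)\neq 1$, so $\phi_Y(\alpha)\neq 1$ for every $\alpha\in\Phi$. Remark \ref{rem:PeriodPointAndMinus2Curves} then yields the conclusion.

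I do not anticipate a serious obstacle here; the only subtle point is making sure that the class $F$ from \eqref{eq:AffineA1}, defined on $(Y_e,D_e)$, is correctly transported to the class $D$ on $(Y,D)$ under the canonical isomorphism $\Lambda(Y_e,D_e)\cong\Lambda(Y,D)$, which is what allows the homomorphism computation above to go through verbatim.
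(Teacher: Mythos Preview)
Your proposal is correct and follows essentially the same route as the paper: reduce to Remark~\ref{rem:PeriodPointAndMinus2Curves}, use the description $\Phi=\{\pm\beta+kF\}$ from \eqref{eq:AffineA1}, and then compute $\phi_Y(\pm\beta+kF)=\phi_Y(\beta)^{\pm1}\phi_Y(F)^k=\phi_Y(\beta)^{\pm1}$ using $\phi_Y(F)=\phi_Y(D)=1$. The only addition is your explicit remark on identifying $F$ with $D$ under parallel transport, which the paper leaves implicit.
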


\begin{proof}
By Remark \ref{rem:PeriodPointAndMinus2Curves}, we just need to check that $\phi_Y(\alpha)\neq 1$ for every $\alpha\in \Phi$. For a root $\alpha=\pm\beta+kF$, we compute
$$
\phi_Y(\pm\beta+kF)=\phi_Y(\beta)^{\pm1}\phi_Y(F)^k.
$$
But $\phi_Y(F)=\phi_Y(D)=1$, because $D$ is a fiber of the minimal elliptic fibration $Y\to\mathbb{P}^1$, so $\phi_Y(\alpha)=\phi_Y(\beta)^{\pm 1}\neq 1$. 
\end{proof}

Moreover, \eqref{eq:AffineA1} together with Remark \ref{rem:FiberIs-2} implies that if $(Y,D)$ is given by a cycle of seven $(-2)$-curves, then any minimal elliptic fibration $Y\to\mathbb{P}^1$ will have at most one reducible fiber besides $D$, consisting of a union of two $(-2)$-curves.

\begin{corollary}
\label{cor:InfiniteMW}
If $D$ is a cycle of seven $(-2)$-curves and $\pi:Y\to\mathbb{P}^1$ is a minimal elliptic fibration, then the Mordell--Weil group $\MW(\pi)$ is infinite.
\end{corollary}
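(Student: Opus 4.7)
The plan is to apply the Shioda--Tate formula (Theorem \ref{thm:Shioda-Tate}) to $\pi$ and bound the contribution of the reducible fibers.

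First I would compute the Picard rank of $Y$. Since $D$ is a cycle of seven $(-2)$-curves, its self-intersection is $D^2 = \sum_{i=1}^{7} D_i^2 + 2\sum_i D_i\cdot D_{i+1} = 7\cdot(-2) + 2\cdot 7 = 0$, so $\rho(Y) = 10 - D^2 = 10$.

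Next I would identify the reducible fibers of $\pi$. By Remark \ref{rem:FiberIs-2}, one reducible fiber must be a multiple of $D$, contributing $m_D - 1 = 6$ to the sum in the Shioda--Tate formula. The same remark, combined with the description \eqref{eq:AffineA1} of the root system $\Phi$ for $r=7$, tells us that any other reducible fiber is a union of internal $(-2)$-curves whose class (with multiplicities) is in $\Phi$; since the roots are of the form $\pm\beta + kF$, the only reducible fibers available are of type $I_2$, contributing $m_p - 1 = 1$ each. Moreover, as pointed out right before the statement, there can be at most one such additional reducible fiber. Hence $\sum_{p\in\Sigma}(m_p-1) \leq 6 + 1 = 7$.

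Plugging into Theorem \ref{thm:Shioda-Tate} yields
\begin{equation*}
10 = \rho(Y) = \rank\MW(\pi) + 2 + \sum_{p\in\Sigma}(m_p-1) \leq \rank\MW(\pi) + 9,
\end{equation*}
so $\rank\MW(\pi)\geq 1$, and in particular $\MW(\pi)$ is infinite. There is no real obstacle here; the only subtle point is justifying that no additional reducible fiber beyond a possible $I_2$ can appear, which is exactly what the combination of \eqref{eq:AffineA1} with Remark \ref{rem:FiberIs-2} provides.
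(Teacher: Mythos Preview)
Your argument is correct and follows essentially the same route as the paper: compute $\rho(Y)=10$, bound the reducible fibers by $D$ plus at most one $I_2$ via Remark~\ref{rem:FiberIs-2} and the description \eqref{eq:AffineA1} of $\Phi$, and apply Shioda--Tate to get $\rank\MW(\pi)\geq 1$. One small phrasing issue: it is the \emph{components} of an extra reducible fiber whose classes lie in $\Phi$, not the fiber class itself (which is $F$), but your conclusion is unaffected.
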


\begin{proof}
The Picard rank is $\rho(Y)=10-D^2=10$, because $D$ is a cycle of $(-2)$-curves. Using the Shioda--Tate formula (Theorem \ref{thm:Shioda-Tate}), the rank of $\MW(\pi)$ is
$$
\rank\MW(\pi)=10-2-\sum_{p\in\Sigma}(m_p-1).
$$
The reducible fibers are $D$ and at most one extra fiber consisting of two $(-2)$-curves, so
$$
\sum_{p\in\Sigma}(m_p-1)\leq (7-1)+(2-1)=7.
$$
Therefore $\rank\MW(\pi)\geq 10-2-7= 1$ and, in particular, the Mordell--Weil group $\MW(\pi)$ is infinite.
\end{proof}

\section{Main result}

In this section, we construct our main example. Consider the toric Looijenga pair $(\bar{Y}, \bar{D}) = (\bar{Y}_{e}, \bar{D}_{e})$, where $\bar{D}_{e}$ is a cycle of length $r = 7$, with self-intersection cycle $(-1,-2,-1,-1,-1,-1,-2)$, and the period point $\phi_{\bar{Y}}=e$ is the identity.
The toric variety $\bar{Y}$ is determined by a fan with seven rays in $\mathbb{Z}^2$, so in particular $\rho(\bar{Y}) = 7 - 2 = 5$. Blowing up one interior point on each of the $(-1)$-components of $\bar{D}$ yields a pair $(Y,D)$ with a cycle of seven $(-2)$-curves 
(see Figure \ref{fig:YDn7-noTorsion}) 
and $\rho(Y)=10$. We construct a pair $(Y,D)$ by choosing the blow-up points in the following way.

\begin{figure}
\centering
\includegraphics[scale=0.22]{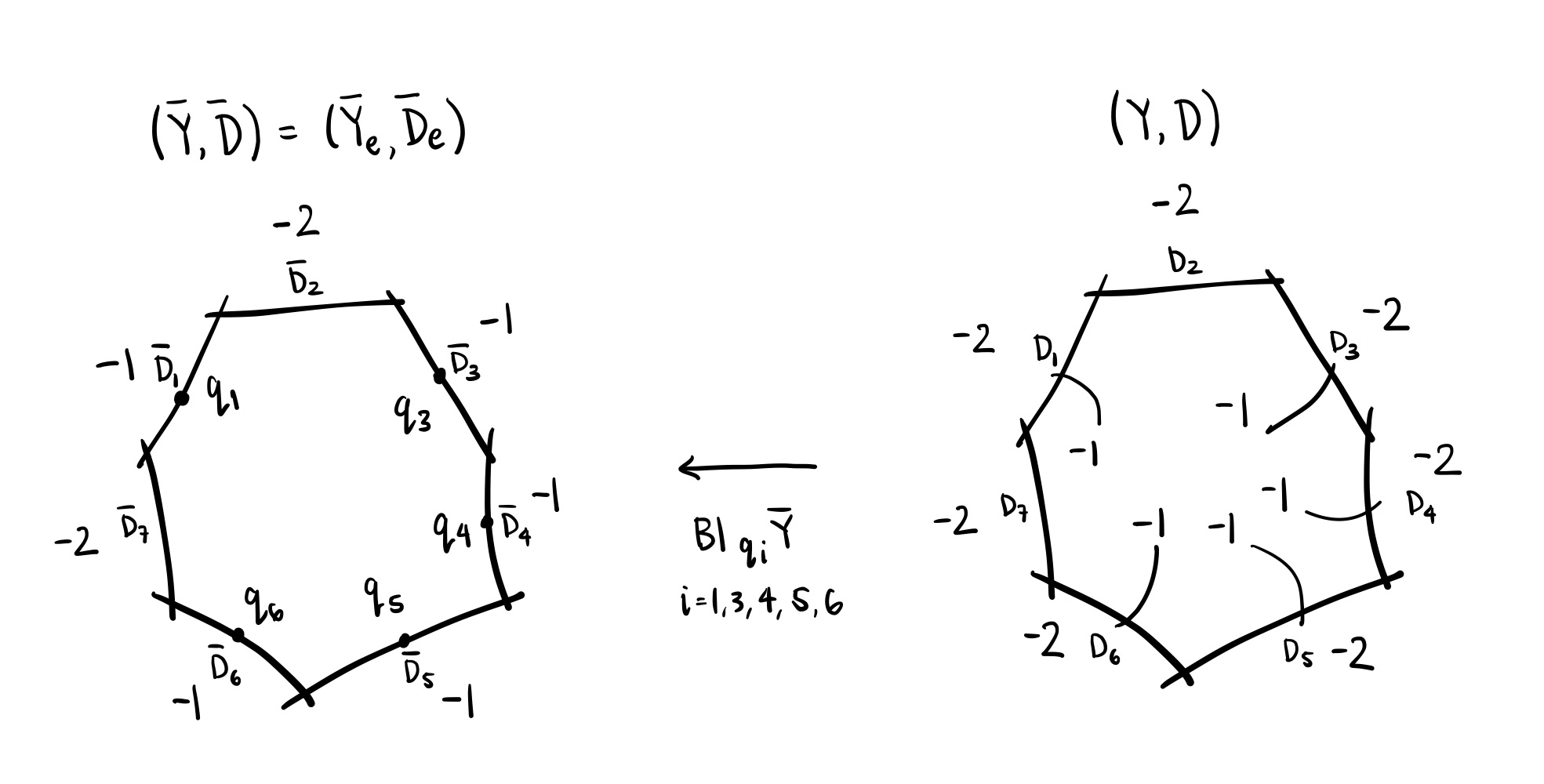}
\caption{$Y=\Bl_{q_i}\bar{Y}$}
\label{fig:YDn7-noTorsion}
\end{figure}

\begin{construction}
\label{cons:(Y,D)}
Let $(Y,D)$ be the blow-up of $(\bar{Y},\bar{D})$ in one point $q_i\in \bar{D}_i^\circ$ for each $(-1)$-component $\bar{D}_i$, in such a way that $\phi_Y:\Lambda(Y,D)\to\Pic^0(D)\cong\mathbb{G}_m$ is torsion, $\phi_Y(D)=1$ and $\phi_Y(\beta)\neq 1$, where $\beta$ is the class appearing in \eqref{eq:AffineA1}.
\end{construction}

The existence of $(Y,D)$ with such choice for $\phi_Y$ is guaranteed by Theorem \ref{thm:PeriodPointSurjective}.

\begin{remark}
\label{rem:familyOfExamples}
There is some flexibility, in that $\phi_Y$ can be chosen to be any morphism satisfying the conditions stated in Construction \ref{cons:(Y,D)}. Therefore, our main theorem is not strictly a statement about a single Looijenga pair, but rather a collection of examples.
\end{remark}

\begin{proposition}
\label{prop:MinimalElliptic}
Let $(Y,D)$ be as in Construction \ref{cons:(Y,D)}. Then there is a minimal elliptic fibration $\pi_1:Y\to \mathbb{P}^1$ with $\pi_1^{*}(\infty)=D$ and no other reducible fibers, and admitting a section.
\end{proposition}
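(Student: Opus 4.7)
The plan is to assemble this proposition directly from the tools already set up in Section~3. By construction, $\phi_Y(D)=1$, i.e.\ $\mathcal{O}_Y(D)|_D$ is trivial in $\Pic^0(D)\cong\mathbb{G}_m$, which is exactly the torsion-of-order-$m=1$ hypothesis of Lemma~\ref{lem:Period point-elliptic}. I would invoke that lemma to obtain a minimal elliptic fibration $\pi_1:Y\to\mathbb{P}^1$ with $\pi_1^*(\infty)=D$; the ``$m=1$'' conclusion of the same lemma also yields a section. This takes care of the existence, the multiplicity-one fiber along $D$, and the existence of a section in one stroke.

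The remaining point is to rule out any other reducible fiber. Here I would use Remark~\ref{rem:FiberIs-2}: any reducible fiber $F$ of $\pi_1$ other than $D$ must be a union, with multiplicities, of internal $(-2)$-curves. But Construction~\ref{cons:(Y,D)} imposes $\phi_Y(\beta)\neq 1$ in addition to $\phi_Y(D)=1$, and these are precisely the hypotheses of Lemma~\ref{lem:No-2Curves}, which guarantees that $(Y,D)$ contains no internal $(-2)$-curves at all. Consequently no such extra reducible fiber $F$ can exist, and $\pi_1$ has $D$ as its unique reducible fiber.

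There is essentially no obstacle here beyond matching hypotheses to the black-box results already proved: the whole point of baking $\phi_Y(D)=1$ and $\phi_Y(\beta)\neq 1$ into Construction~\ref{cons:(Y,D)} was exactly so that Lemmas~\ref{lem:Period point-elliptic} and~\ref{lem:No-2Curves} combine to force the fibration to have the desired shape. The only minor thing to double-check is that Remark~\ref{rem:FiberIs-2} is applicable, which just requires that $\pi_1$ is minimal — and that is explicitly part of the conclusion of Lemma~\ref{lem:Period point-elliptic}.
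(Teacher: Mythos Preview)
Your proposal is correct and follows essentially the same argument as the paper's own proof: invoke Lemma~\ref{lem:Period point-elliptic} with $m=1$ (from $\phi_Y(D)=1$) to obtain the minimal elliptic fibration with a section, then use Lemma~\ref{lem:No-2Curves} to rule out internal $(-2)$-curves and hence any additional reducible fiber. The only cosmetic difference is that you cite Remark~\ref{rem:FiberIs-2} explicitly for the structure of reducible fibers, whereas the paper simply states this fact inline.
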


\begin{proof}
The existence of the minimal elliptic fibration with $\pi_1^{*}(\infty)=D$ and admitting a section is guaranteed by Lemma \ref{lem:Period point-elliptic} and the fact that $\phi_Y(D)=1$, which by definition means that $\mathcal{O}_Y(D)|_D$ is trivial. Any other reducible fiber would consist of a union of internal $(-2)$-curves, with some multiplicities. But by Lemma \ref{lem:No-2Curves}, $(Y,D)$ has no internal $(-2)$-curves.
\end{proof}

Since $\pi_1$ has a section, $\MW(\pi_1)$ acts simply transitively on the sections of $\pi_1$ by translation. 
Now let $C$ and $C'$ be two sections of $\pi_1$ and suppose that $C$ and $C'$ intersect the same component $D_i$, that is, $C|_D=\mathcal{O}_D(p)$ and $C'|_D=\mathcal{O}_{D}(q)$ for some $p,q\in D_i^\circ$. Then $C-C'\in \Lambda(Y,D)$ and therefore $(C-C')|_D=\mathcal{O}_D(p-q)$ must be torsion of order at most the order of $\phi_Y$. Hence, there are only finitely many choices for such $\mathcal{O}_D(p-q)$. But there are infinitely many sections, as the rank of $\MW(\pi_1)$ is positive by Corollary \ref{cor:InfiniteMW}. Then there must be a component $D_i$ and some point $p\in D_i^\circ$ through which there pass infinitely many such sections of $\pi_1$, and each section is a $(-1)$-curve. Thus, we have proved the following proposition.

\begin{proposition}
\label{prop:SectionsThru1pt}
Let $(Y,D)$ be as in Construction \ref{cons:(Y,D)} and $\pi_1$ as in Proposition \ref{prop:MinimalElliptic}. There is a component $D_i$ of $D$ and a point $p\in D_i^\circ$ through which there pass infinitely many $(-1)$-curves.
\end{proposition}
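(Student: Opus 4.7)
The plan is to exploit the fact that $\pi_1$ admits a section, so $\MW(\pi_1)$ acts simply transitively on sections by translation, and moreover the sections of $\pi_1$ coincide with the $(-1)$-curves of $Y$: the last sentence of the proof of Lemma \ref{lem:Period point-elliptic} (applied to $m=1$) shows that any $(-1)$-curve $C$ satisfies $C \cdot F = C \cdot D = 1$ and so must be a section. By Corollary \ref{cor:InfiniteMW}, $\MW(\pi_1)$ is infinite, so there are infinitely many sections. Since $D$ is a (reduced) fiber and $C \cdot D = 1$ for each section, every section $C$ meets $D$ transversely in a single point lying on some $D_i^\circ$. Thus it suffices to show, by a pigeonhole argument, that infinitely many sections hit the same point $p \in D_i^\circ$.

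Next I would set up the periodicity obstruction. Suppose $C$ and $C'$ are two sections that both meet the same component $D_i$, say $C|_D = \mathcal{O}_D(p)$ and $C'|_D = \mathcal{O}_D(q)$ with $p, q \in D_i^\circ$. Then $C \cdot D_j = C' \cdot D_j$ for every $j$ (both are $\delta_{ij}$), so $(C - C') \cdot D_j = 0$ and hence $C - C' \in \Lambda(Y,D)$. Applying the period point,
\[
\phi_Y(C - C') \;=\; (C - C')|_D \;=\; \mathcal{O}_D(p - q)
\]
must belong to the image of $\phi_Y$. By Construction \ref{cons:(Y,D)}, $\phi_Y$ is torsion, and since $\Lambda(Y,D)$ is finitely generated the image $\phi_Y(\Lambda(Y,D)) \subset \mathbb{G}_m$ is a finitely generated torsion group, hence finite.

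Now I would conclude by pigeonhole. Fix any section $C_0$ and let $p_0 \in D_{i_0}^\circ$ be its intersection point with $D$. For each component $D_i$, the previous step shows that a section meeting $D_i$ at a point $p$ can do so only for finitely many choices of $p$ (namely those with $\mathcal{O}_D(p - p_0)$ in the finite image of $\phi_Y$, after accounting for the component indices). Since $r$ is finite and each component contributes only finitely many possible meeting points, there are only finitely many points on $D$ through which any section can pass. Because there are infinitely many sections, some point $p \in D_i^\circ$ must be met by infinitely many of them; each such section is a $(-1)$-curve, as desired.

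The only real content beyond bookkeeping is verifying that $C - C' \in \Lambda(Y,D)$ and that $\phi_Y(\Lambda(Y,D))$ is finite, both of which reduce to standard facts about the period point together with the torsion hypothesis; I do not anticipate a significant obstacle.
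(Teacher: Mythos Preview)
Your proposal is correct and follows essentially the same approach as the paper: both use that $\MW(\pi_1)$ is infinite (Corollary~\ref{cor:InfiniteMW}) to get infinitely many sections, observe that for two sections meeting the same component one has $C-C'\in\Lambda(Y,D)$ with $\phi_Y(C-C')=\mathcal{O}_D(p-q)$ torsion, and then apply pigeonhole. One small bookkeeping point: your fixed $C_0$ meets a particular $D_{i_0}$, so the argument $C-C_0\in\Lambda(Y,D)$ only applies to sections meeting that same component; the cleanest phrasing is to first use pigeonhole over the seven components to find one $D_i$ met by infinitely many sections, and then run the period-point argument within that component, exactly as the paper does.
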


\begin{construction}
\label{cons:(Y,D)tilde}
Let $(Y,D)$ be as in Construction \ref{cons:(Y,D)} and $\pi_1$ as in Proposition \ref{prop:MinimalElliptic}. Let $p$ be a point as in Proposition \ref{prop:SectionsThru1pt}, through which there pass infinitely many sections of $\pi_1$. Let $(\widetilde{Y},\widetilde{D})$ be the Looijenga pair where $\widetilde{Y}$ is obtained by blowing up $Y$ at the point $p$ and $\widetilde{D}$ is the strict transform of $D$.
\end{construction}

Observe that, for each $(-1)$-curve in $Y$ passing through $p$, its strict transform becomes a $(-2)$-curve in $\widetilde{Y}$. Moreover, it is disjoint from $\widetilde{D}$. In particular, $\widetilde{Y}$ contains infinitely many internal $(-2)$-curves, and therefore its Weyl group $W_{\widetilde{Y}}$ is infinite. This fact will be crucial for the proof of Theorem \ref{thm:MainThm}.
We also note that $\widetilde{D}$ is a cycle of six $(-2)$-curves and one $(-3)$-curve, which implies that $(\widetilde{Y},\widetilde{D})$ is negative definite and $\widetilde{D}^2=-1$. 

\begin{remark}
Since $\phi_{\widetilde{Y}}$ is torsion, the automorphism group of $\widetilde{Y}$ must be finitely generated by \cite{L22}.
\end{remark}

\begin{theorem}
\label{thm:MainThm}
Let $(\widetilde{Y},\widetilde{D})$ be the Looijenga pair from Construction \ref{cons:(Y,D)tilde}. Then $\Aut(\widetilde{Y},\widetilde{D})$ is not commensurable with an arithmetic group.
\end{theorem}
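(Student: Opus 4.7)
The plan is to apply Theorem \ref{thm:Totaro-mainTheorem} to $M := \Lambda(\widetilde{Y}, \widetilde{D})$. First I would compute that, since $\widetilde{D}^2 = -1$, $r = 7$, and the $\widetilde{D}_i$ are linearly independent (their intersection matrix being negative definite), Lemma \ref{lem:RankOfM} gives $\rank M = 4$; combined with the Hodge Index Theorem on $\Pic(\widetilde{Y})$ of signature $(1, 10)$ and the negative definiteness of $\langle \widetilde{D}_1, \ldots, \widetilde{D}_7 \rangle$, the lattice $M$ has signature $(1, 3)$, so $m = 3 \geq 3$. Since the kernel of $\Aut(\widetilde{Y}, \widetilde{D}) \to O(M)$ is finite (automorphisms fixing each $\widetilde{D}_i$ act trivially on $\langle \widetilde{D}_i\rangle$, so this kernel sits inside the finite kernel of $\Aut(\widetilde{Y}) \to O(\Pic(\widetilde{Y}))$; cf.\ \cite[Proposition 2.1]{DM19}), it suffices to show that the image $S \subset O(M)$ has infinite index in $O(M)$ and contains $\mathbb{Z}^2$ as an infinite-index subgroup.

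Next I would establish infinite index of $S$ in $O(M)$ using the infinite Weyl group. Each internal $(-2)$-curve class lies in $M$, so its reflection belongs to $O(M)$; thus $W_{\widetilde{Y}} \subset O(M)$, and by the construction this group is infinite. Any element of $S$ must preserve the ample cone of $\widetilde{Y}$, whereas a nontrivial Weyl element sends this cone into a different Weyl chamber, so $W_{\widetilde{Y}} \cap S = \{1\}$ and $[O(M) : S] \geq |W_{\widetilde{Y}}| = \infty$.

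Then I would build a $\mathbb{Z}^2 \subset S$ from the Mordell--Weil group of $\pi_1 : Y \to \mathbb{P}^1$. By Shioda--Tate and Proposition \ref{prop:MinimalElliptic}, since the only reducible fiber $D$ has $7$ components, $\rank \MW(\pi_1) = 10 - 2 - 6 = 2$. Let $\MW_0 \subset \MW(\pi_1)$ be the finite-index subgroup of sections meeting the same component $D_i \ni p$ as the zero section $C_0$. For $\tau \in \MW_0$, the class $C_\tau - C_0$ lies in $\Lambda(Y,D)$, and by \cite[Lemma 1.6]{F15} the action of $\tau$ on $D_i^\circ \cong \G_m$ is multiplication by $\phi_Y(C_\tau - C_0)^{-1}$. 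Since $\phi_Y$ is torsion by Construction \ref{cons:(Y,D)}, this is always a root of unity, so $\MW_0 \to \G_m$ has finite image and the kernel (which fixes every point of $D_i^\circ$, in particular $p$) has rank $2$. Lifting to $\widetilde{Y}$ yields a $\mathbb{Z}^2 \subset \Aut(\widetilde{Y}, \widetilde{D})$, whose image in $S$ I call $\Gamma$.

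The final step is to show that $\Gamma$ has infinite index in $S$, i.e.\ that $S$ is not virtually abelian. My plan is to contract a $(-1)$-curve on $\widetilde{Y}$ meeting the $(-3)$-component $\widetilde{D}_{i_0}$---for example the strict transform of a section of $\pi_1$ meeting $D_{i_0}$ but not through $p$, which exists because most elements of $\MW_0$ move $p$ to a different point of $D_{i_0}$---to obtain a new Looijenga pair $(Y'', D'')$ whose boundary is again a cycle of seven $(-2)$-curves and which inherits a torsion period point from $\widetilde{Y}$. Provided $(Y'', D'')$ satisfies the analogues of the conditions in Construction \ref{cons:(Y,D)}, the same analysis applied to a minimal elliptic fibration $\pi_2$ on $Y''$ yields an infinite-order element of $S$ whose parabolic fixed point on the boundary of hyperbolic $3$-space is the fiber class of $\pi_2$, which is genuinely distinct from the fiber class of $\pi_1$ in $\Pic(\widetilde{Y})$; the hyperbolic-geometry argument in Lemma \ref{lem:TwoDifferentMW} then shows it cannot commute with $\Gamma$, forcing $\Gamma$ to have infinite index in $S$. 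Theorem \ref{thm:Totaro-mainTheorem} then yields the conclusion. The hard part will be verifying that the pair $(Y'', D'')$ from this second contraction truly satisfies the needed period-point conditions ($\phi_{Y''}(D'') = 1$ and $\phi_{Y''}(\beta'') \neq 1$), so that the parallel argument actually produces a second, distinct parabolic element of $S$.
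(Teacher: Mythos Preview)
Your outline matches the paper's proof through the construction of the second blown-down pair, but the ``hard part'' you isolate is mis-specified and in fact unachievable. If you contract the strict transform of a section of $\pi_1$ through a point $q\neq p$ on $D_{i_0}^\circ$, a direct calculation (pull $D''$ back to $\widetilde{Y}$ and then to $Y$) gives $\phi_{Y''}(D'')=\mathcal{O}_D(q-p)\in\Pic^0(D)$, which is a \emph{nontrivial} root of unity $\xi_m$ precisely because $q\neq p$ and $\phi_Y$ is torsion. So the condition $\phi_{Y''}(D'')=1$ you are trying to verify can never hold for this contraction, and your plan as stated stalls.

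The fix---and this is exactly what the paper does---is to use the $m>1$ case of Lemma~\ref{lem:Period point-elliptic}: since $\mathcal{O}_{Y''}(D'')|_{D''}$ is torsion of order $m$, there is a minimal elliptic fibration $\pi_2:Y''\to\mathbb{P}^1$ with $mD''$ as a (multiple) fiber. Then Corollary~\ref{cor:InfiniteMW}, which only uses that $D''$ is a cycle of seven $(-2)$-curves and needs no hypothesis on $\beta''$, already gives $\rank\MW(\pi_2)\geq 1$. A finite-index subgroup $H\subset\MW(\pi_2)$ fixes $q$ (same torsion argument as for $p$), hence lifts to $\Aut(\widetilde{Y},\widetilde{D})$, and Lemma~\ref{lem:TwoDifferentMW} shows $\Gamma\cap H$ is finite since the fiber classes $[D]$ and $m[D'']$ are distinct in $\Pic(\widetilde{Y})$. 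That gives $\Gamma$ infinite index in $S$, and Theorem~\ref{thm:Totaro-mainTheorem} finishes. In short: drop the attempt to force $\phi_{Y''}(D'')=1$ and $\phi_{Y''}(\beta'')\neq 1$; neither is needed, and the first is impossible.
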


\begin{proof}
We use Theorem \ref{thm:Totaro-mainTheorem} as follows. Let $M=\Lambda(\widetilde{Y},\widetilde{D})$. Since $\widetilde{D}_1,\ldots,\widetilde{D}_7$ are linearly independent, by Lemma \ref{lem:RankOfM}, $M$ is a lattice of rank $10+1-7=4$. Moreover, by the Hodge Index Theorem, the signature of $M$ is $(1,3)$. Every automorphism $\varphi\in\Aut(\widetilde{Y},\widetilde{D})$ preserves the lattice $M$, so we get a group homomorphism $\Aut(\widetilde{Y},\widetilde{D})\to O(M)$, whose image we denote by $S$. Then $S$ is isomorphic to the quotient of $\Aut(\widetilde{Y},\widetilde{D})$ by a finite subgroup.
Indeed, if an automorphism $\varphi$ acts trivially on $M$ in addition to fixing each $\widetilde{D}_i$, then $\varphi$ must act trivially on $\Pic(\widetilde{Y})$. But the subgroup of $\Aut(\widetilde{Y})$ that acts trivially on $\Pic(\widetilde{Y})$ is finite.
As a consequence, we obtain that $S\doteq \Aut(\widetilde{Y},\widetilde{D})$, since the latter is virtually torsion-free.
In particular, it suffices to show that $S$ is not commensurable with an arithmetic group.

First we need to show that $S$ has infinite index in $O(M)$. For this, consider the Weyl group $W_{\widetilde{Y}}\subset O (\Pic(\widetilde{Y}))$, which is of infinite order as seen earlier. It is not difficult to see that, given a class $\alpha$ of an internal $(-2)$-curve, the simple reflection $s_\alpha$ fixes the class of each $\widetilde{D}_i$. Therefore, $W_{\widetilde{Y}}$ acts faithfully on $M$, and by abuse of notation we write $W_{\widetilde{Y}}\subset O(M)$. We will see that $W_{\widetilde{Y}}$ and $S$ intersect trivially.

Let $C^{++}_{D}\subset \Pic(\widetilde{Y})$ be the full-dimensional subcone defined in \cite[Definition 1.7]{GHK15}. 
By \cite[Theorem 3.2]{GHK15}, the set of internal $(-2)$-curves determines a decomposition of $C^{++}_D$ into a collection of Weyl chambers on which $W_{\widetilde{Y}}$ acts simply transitively. One of these chambers is identified with the ample cone of $\widetilde{Y}$, in the sense that its closure coincides with $\Nef(\widetilde{Y})$. Moreover, $M$ intersects the closure of $C^{++}_{D}$ in a face $F$. Hence, intersecting with $M$ we get a chamber decomposition of $F$, where $W_{\widetilde{Y}}$ acts simply transitively on the set of chambers, whereas $\Aut(\widetilde{Y},\widetilde{D})$ (and therefore also $S$) preserves the chamber $F\cap \Nef(\widetilde{Y})$. We conclude that $W_{\widetilde{Y}}\cap S$ is trivial, so $S$ has infinite index in $O(M)$.

Now consider the minimal elliptic fibration $\pi_1:Y\to\mathbb{P}^1$ given by Proposition \ref{prop:MinimalElliptic} and let $\widetilde{\pi}_1:\widetilde{Y}\to\mathbb{P}^1$ be its composition with the blow-up $\widetilde{Y}=\Bl_pY\to Y$. The only reducible fiber of $\pi_{1}$ is $D$. Using the Shioda--Tate formula, we find that $\rank\MW(\pi_{1})=10-2-(7-1)=2$.
The group $\MW(\pi_1)=\MW(\widetilde{\pi}_1)$ acts on $Y$ by automorphisms, and we let $G$ be (the free part of) the subgroup of $\MW(\pi_1)$ consisting of automorphisms that
fix the point $p$. Then $G$ has finite index in $\MW(\pi_1)$ because $\phi_Y$ is torsion (cf. the argument for Proposition \ref{prop:SectionsThru1pt}), and therefore $G\cong\mathbb{Z}^2$. Since the automorphisms in $G$ lift to $\widetilde{Y}$, we have $G\subset\Aut(\widetilde{Y})$, and since $(\widetilde{Y},\widetilde{D})$ is negative definite, passing to a finite-index subgroup we can assume $G\subset\Aut(\widetilde{Y},\widetilde{D})$. Recall that the homomorphism $\Aut(\widetilde{Y},\widetilde{D})\to S\subset O(M)$ has finite kernel, so it sends $G$ into an isomorphic copy $G\cong\mathbb{Z}^2\subset S$.

We claim that $G$ has infinite index in $S$. To see this, we construct a second elliptic fibration as follows. Recall the Construction \ref{cons:(Y,D)tilde} of $(\widetilde{Y},\widetilde{D})$ from $(Y,D)$. Let $q\in D_i^\circ$ be another point in the same component as $p$, such that $\mathcal{O}_Y(q-p)$ is a primitive $m$-th root of unity $\xi_m$ where $m\neq 1$, and such that through $q$ there passes another section $C:=\sigma(\mathbb{P}^1)$ of $\pi_1$. Such point $q$ exists because we have chosen $\phi_Y$ to be torsion and not the identity. Then $C$ is a $(-1)$-curve in $Y$, and its strict transform $\widetilde{C}\subset \widetilde{Y}$ is a $(-1)$-curve in $\widetilde{Y}$. Let $(Y',D')$ be the Looijenga pair obtained by blowing down $\widetilde{C}$. Then $D'$ is a cycle of seven $(-2)$-curves, and by construction $\phi_{Y'}(D')=\xi_m$. By Lemma \ref{lem:Period point-elliptic}, there is a minimal elliptic fibration $\pi_2:Y'\to\mathbb{P}^1$ with $mD$ as a non-reduced fiber. Let $\widetilde{\pi}_2:\widetilde{Y}\to\mathbb{P}^1$ be its composition with the blow-up $\widetilde{Y}=\Bl_qY'\to Y'$. By Corollary \ref{cor:InfiniteMW}, $\MW(\pi_2)$ is infinite. Similarly as before, there is a finite-index subgroup $H\subset \MW(\pi_{2})=\MW(\widetilde{\pi}_{2})$ such that $H\subset \Aut(\widetilde{Y},\widetilde{D})$, and 
by Lemma \ref{lem:TwoDifferentMW}
we see that $G\cap H$ is finite 
(and in fact it must be trivial since $G$ has been chosen to be torsion-free).
This proves that $G$ has infinite index as a subgroup of $S$. By Theorem \ref{thm:Totaro-mainTheorem}, we conclude that $S$ is not commensurable with an arithmetic group, and then the same is true for $\Aut(\widetilde{Y},\widetilde{D})$.
\end{proof}


\begin{bibdiv}
\begin{biblist}
\bibselect{ourlist}
\end{biblist}
\end{bibdiv}

\end{document}